\newtheorem{thm}{Theorem}
\newtheorem{lemma}[thm]{Lemma}
\newtheorem{cor}[thm]{Corollary}
\def\beq{ \begin{equation} }
\def\eeq{ \end{equation} }
\def\ep{\epsilon}
\def\square{\vcenter{\vbox{\hrule height .4pt
  \hbox{\vrule width .4pt height 5pt \kern 5pt
        \vrule width .4pt} \hrule height .4pt}}}
\def\eopt{\hfill$\square$}
\def\ZZ{\mathbb{Z}}
\def\CC{\mathbb{C}}
\def\LL{\mathbb{L}}
\def\var{\hbox{var}\,}
\def\rad{\hbox{rad}}
\def\mathds#1{#1}
\newcommand{\dual}{\mathbb{L}}
\title{Poisson percolation on the square lattice}
\author{Irina Cristali, Matthew Junge and Rick Durrett, }
\begin{document}

\maketitle 

\begin{abstract}
 On the square lattice raindrops fall on an edge with midpoint $x$ at rate $\|x\|_\infty^{-\alpha}$. The edge becomes open when the first drop falls on it. Let $\rho(x,t)$ be the probability that the edge with midpoint $x=(x_1,x_2)$ is open at time $t$ and  let $n(p,t)$ be the distance at which edges are open with probability $p$ at time $t$. We show that with probability tending to 1 as $t \to \infty$: (i) the cluster containing the origin $\CC_0(t)$ is contained in the square of radius $n(p_c-\ep,t)$, and (ii) the cluster fills the square of radius $n(p_c+\ep,t)$ with the density of points near $x$ being close to $\theta(\rho(x,t))$ where $\theta(p)$ is the percolation probability when bonds are open with probability $p$ on $\ZZ^2$. Results of Nolin suggest that if $N=n(p_c,t)$ then the boundary fluctuations of $\CC_0(t)$ are of size $N^{4/7}$. 
\end{abstract}

\section{Introduction} \label{Intro}

We study the geometry of the open cluster containing the origin in a nonhomogeneous version of bond percolation on the two-dimensional square lattice that we call \emph{Poisson percolation}. On the lattice an edge with midpoint $x$ is assigned an independent Poisson processes with rate $\|x\|^{-\alpha}$ where $\|x\| = \max\{|x_1|,|x_2|\}$ is the $L^\infty$ norm. The edge becomes open at the time of the first arrival. Our inspiration comes from the \emph{rainstick process}. It was introduced by Pitman and Tang in \cite{PT17} with followup work by Pitman, Tang and Duchamps \cite{DPT}. In this discrete time process raindrops fall one after the other on the positive integers and sites become wet when landed on. The locations of raindrop landings are independent random variables with a geometric distribution: $P(X=k) = (1-p)^{k-1}p$ for $k \ge 1$. Let $T$ be the first time that the configuration is a single wet component containing 1, and let $K$ be its length. Pitman and Tang observed in \cite{PT17} that the value of $K$ describes the size of the first block in a family of \emph{regenerative permutations}. Understanding block sizes has been useful for studying the structure of random Mallows permutations \cite{BB,GP16}. 

In \cite{CRS} the asymptotic behavior of $T$ and $K$ as $p \to 0$ was studied. They proved that $T \approx \exp(e^{c/p})$ and $K \approx e^{c/p}$, where $c \approx 1.1524$ is a constant defined by an integral. This says that the first block is large, and takes a very large amount of time to form. 
It turns out that an exponentially decaying tail is needed for the rainstick process to terminate with probability one. Theorem 5 in \cite{CRS} shows that if raindrops land beyond site $k$ with probability $\exp(-k^\beta)$ for $\beta <1$ then $T$ is infinite with positive probability. 

The Poisson percolation we study here is a higher dimensional version of the rainstick process. In both processes distant edges are less likely to become open (wet). However, we have a power-law tail rather than a geometric, so it is likely that there is no time at which there is a single component. So, we will instead study the size and density of the wet cluster containing the origin.

To state our results we introduce some notation. Here, we study Poisson percolation only on the two dimensional lattice $\ZZ^2$. An edge with midpoint $x$ will be open at time $t$ with probability $\rho(x,t) = 1 - \exp(-t\|x\|^{-\alpha})$. We define the cluster containing the origin at time $t$ to be the set of points $\CC_0(t)$ that can be reached from the origin by a path of open edges. Let 
\beq
c_{p,\alpha} = (- \log (1-p))^{-1/\alpha}.
\label{cpa}
\eeq
A little algebra gives
$n(p,t) =  \max\{ \|x \|\colon \rho(x,t) \geq p \} = c_{p,\alpha} t^{1/\alpha}$.

Let $R(0,r) = \{ x \colon \| x \| \le r \}$ be the square with radius $r$ centered at 0. Recall that $p_c =1/2$ is the critical value for bond percolation on the two-dimensional lattice. For this and other facts we use about percolation, see Grimmett's book \cite{Gri97}. Our first result gives an upper bound on $\CC_0(t)$.

\begin{thm} \label{thm:outside}
For any $\ep>0$ the probability $\CC_0(t) \subseteq R(0,n(p_c-\ep,t))$ tends to 1 as $n \to\infty$. 
\end{thm}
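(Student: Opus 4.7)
The plan is to interpose a slightly smaller square on which every edge is strictly subcritical and invoke subcritical exponential decay of the cluster radius. Fix $\ep>0$ small and set $M := n(p_c - \ep/2, t) + 1$ and $N := n(p_c - \ep, t)$. Because $c_{p,\alpha}$ is strictly decreasing in $p$, we have $M<N$ and $N - M = (c_{p_c-\ep,\alpha} - c_{p_c-\ep/2,\alpha})\, t^{1/\alpha} + O(1) \to \infty$. The additive $+1$ in $M$ absorbs the half-integer geometry of edge midpoints, so that every edge whose midpoint $x$ satisfies $\|x\| > M - \tfrac12$ has $\rho(x,t) \leq p_c - \ep/2 < p_c$. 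Since edge states are independent, on this region they are stochastically dominated by i.i.d.\ Bernoulli($p_c - \ep/2$) bond percolation.

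Next comes a geometric decomposition of any violating path. Suppose $\CC_0(t)$ contains some $v$ with $\|v\| > N$, and pick a self-avoiding open path $\gamma$ from $0$ to $v$. Let $u$ be the \emph{last} vertex of $\gamma$ with $\|u\| \leq M$ and let $u'$ be its successor on $\gamma$; then $M < \|u'\| \leq M+1$ and every vertex of $\gamma$ after $u$ lies outside $R(0,M)$. Consequently every edge traversed by $\gamma$ beyond $u$ has midpoint in the subcritical zone, and the open cluster of $u'$ in the restricted (subcritical) percolation contains $v$. Its $L^\infty$-radius is therefore at least $\|v - u'\| \geq N - M - 1$.

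Subcritical exponential decay (Aizenman--Barsky, Menshikov; e.g.\ Theorem 5.4 of \cite{Gri97}) applied to the dominating Bernoulli($p_c-\ep/2$) model gives, for each fixed vertex $u'$,
\[
\PP\big(\text{open cluster of } u' \text{ has } L^\infty\text{-radius} \geq L\big) \leq C_1 L \, e^{-c_1 L},
\]
with $c_1, C_1 > 0$ depending only on $\ep$. Since there are $O(M)$ candidate vertices $u'$ with $M < \|u'\| \leq M+1$, a union bound with $L = N - M - 1$ yields
\[
\PP\big(\CC_0(t) \not\subseteq R(0,N)\big) \leq C_2\, M\, (N-M)\, e^{-c_1(N - M - 1)},
\]
which tends to $0$ as $t \to \infty$ since $N - M$ grows linearly in $t^{1/\alpha}$ while $M$ also grows only polynomially.

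The only step that requires genuine care is the geometric decomposition of $\gamma$: because the path from $0$ to $v$ may cross in and out of $R(0,M)$ several times, choosing $u$ to be the \emph{last} visit to $R(0,M)$ (rather than the first exit) is what guarantees all subsequent edges lie in the subcritical zone. The matching of the integer-vertex boundary $\{\|u\| \leq M\}$ with the half-integer edge-midpoint condition is the reason for the $+1$ cushion in the definition of $M$; no deeper obstacle arises.
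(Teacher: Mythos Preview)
Your argument is correct and follows essentially the same route as the paper: both proofs observe that any path escaping $R(0,N)$ must traverse an annulus where all edge probabilities are strictly subcritical, and then combine exponential decay of the cluster radius with a union bound over the $O(N)$ boundary vertices. The only cosmetic differences are that the paper uses a thin annulus of width $C_1\log N$ at level $p_c-\ep$ (yielding a polynomial error $O(1/N)$ but technically giving containment in $R(0,N+C_1\log N)$), whereas your two-scale choice $M=n(p_c-\ep/2,t)<N$ produces an annulus of width $\sim t^{1/\alpha}$, a stretched-exponential error bound, and containment directly in $R(0,N)$.
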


Having shown that $\CC_0(t)$ is with high probability contained within $R(0,n(p_c-\ep,t))$, we would like to describe what it looks like inside $R(0,n(p_c+\ep,t))$. To do this we relate it to standard bond percolation on $\ZZ^2$. Let $\mathcal C_0$ be the open cluster containing the origin in bond percolation where each edge is open with probability $p$, and set $\theta(p) = P_p( |{\cal C}_0| = \infty)$, where $P_p$ is the probability measure for bond percolation on $\ZZ^2$, when edges are open with probability $p$. Intuitively, near $x \in R(0,n(p_c+\ep,t))$ the density of points in $\CC_0(t)$ will be close to $\theta(\rho(x,t))$.
 To state this precisely, let $n = n(p_c+\epsilon,t)$. Fix $1/2 < a < 1$ and tile the plane with boxes of side length $n^{a}$: 
 $$
R_{i,j} = [in^a,(i+1)n^a] \times [jn^a,(j+1)n^a],
$$
with center $x_{i,j}$. Let $D_{i,j} = |\CC_0(t) \cap R_{i,j}|/n^{2a}$ be the density of points in $R_{i,j}$ that belong to ${\CC}_0(t)$ and let $\Lambda(t,\ep) = \{ (i,j) \colon R_{i,j} \subset R(0,n(p_c+\ep,t)) \}$. We prove that, as $n\to\infty$, the density of $\CC_0(t)$ in each of these boxes converges to the density of the infinite component in bond percolation with probability $\rho(x_{i,j},t)$ of an edge being open. 	

\begin{thm} \label{thm:inside}
For any $\ep,\delta>0$, as $t\to\infty$,
$$
P\left( \sup_{(i,j) \in \Lambda(t,\ep)} |D_{i,j}(t) - \theta(\rho(x_{i,j},t))| > \delta \right) \to 0.
$$
\end{thm}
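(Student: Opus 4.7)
The plan is to reduce to classical homogeneous supercritical bond percolation, box by box. Inside each $R_{i,j} \in \Lambda(t,\ep)$ the edge-opening probabilities of the Poisson percolation are nearly constant, so the configuration restricted to $R_{i,j}$ can be sandwiched between two i.i.d.\ Bernoulli percolations of slightly different parameters. Because $R_{i,j} \subset R(0,n(p_c+\ep,t))$, the parameter $p_{i,j} := \rho(x_{i,j},t)$ satisfies $p_{i,j} \geq p_c + \ep/2$ for $t$ large, uniformly over $\Lambda(t,\ep)$, so we work throughout in the strongly supercritical regime.

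For the local coupling, a first-order expansion of $\rho(\cdot,t) = 1-\exp(-t\|\cdot\|^{-\alpha})$ around $x_{i,j}$ yields $|\rho(x,t) - p_{i,j}| \leq C n^{a-1}$ for all $x \in R_{i,j}$, using $\|x_{i,j}\| \leq n$ and $t = \Theta(n^\alpha)$. Hence the Poisson percolation inside $R_{i,j}$ can be coupled to lie between two homogeneous Bernoulli percolations with parameters $p_{i,j} \pm Cn^{a-1}$, both supercritical for large $t$ since $a<1$. For i.i.d.\ bond percolation on $\ZZ^2$ at parameter $p \in [p_c + \ep/2, 1]$, the density of the unique ``giant'' open cluster inside an $L \times L$ box is within $\delta/2$ of $\theta(p)$ with probability at least $1 - e^{-cL}$, where $c>0$ can be chosen uniformly on $[p_c + \ep/2, 1]$; this follows from Pisztora's renormalization together with the exponential decay of finite cluster sizes above $p_c$. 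The same renormalization shows that in any two adjacent boxes the local giants connect across the shared face with probability $1 - e^{-cL}$. Applying these estimates in every $R_{i,j} \in \Lambda(t,\ep)$ with $L = n^a$, the polynomial-in-$n$ number of boxes is crushed by the stretched-exponential error, so a union bound gives that with probability tending to one every box contains a giant local cluster of the correct density and all neighboring giants are joined.

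Chaining along a lattice path of boxes from the origin to any $R_{i,j} \in \Lambda(t,\ep)$, the fused giants form a single super-cluster containing $0$; therefore $\CC_0(t) \cap R_{i,j}$ coincides with the local giant up to $o(n^{2a})$ sites lying in finite pieces, and $|D_{i,j} - \theta(p_{i,j})| < \delta$ holds uniformly over $\Lambda(t,\ep)$. The main obstacle is the quantitative control near the outer boundary of $\Lambda(t,\ep)$, where $p_{i,j}$ is only $\ep$ above $p_c$: both the convergence of the giant-cluster density to $\theta(p_{i,j})$ and the box-to-box crossing probabilities degrade as $p \downarrow p_c$. Verifying that Pisztora's renormalization (equivalently the Grimmett--Marstrand finite-size criterion) yields constants uniform on $[p_c + \ep/2, 1]$ and rates strong enough to absorb the polynomial box count via the union bound is the technical heart of the proof.
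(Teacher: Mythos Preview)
Your approach is valid but genuinely different from the paper's. The paper exploits planar duality in an essential way: it shows that with high probability there is a ``net'' of open crossings in horizontal and vertical strips of width $C_1\log n$ covering all of $R(0,n)$ (Lemma~\ref{Ajbound}), that the origin connects to this net (Lemma~\ref{caught_in_the_net}), and then observes that on this event a vertex $y$ lies in $\CC_0(t)$ \emph{if and only if} $\rad(\mathcal{C}_y) \ge 2C_1\log n$. This turns $|\CC_0(t)\cap R_{i,j}|$ into a sum of $\{0,1\}$-valued indicators that are independent once $4C_1\log n$ apart, and a straightforward second-moment (Chebyshev) bound plus a union bound finishes. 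Your route instead sandwiches the inhomogeneous field in each $R_{i,j}$ between two homogeneous supercritical Bernoulli percolations and invokes Pisztora/Grimmett--Marstrand coarse-graining for the giant-cluster density and the gluing of adjacent giants. This buys you dimension-independence and stretched-exponential error terms, at the cost of much heavier machinery; the paper's argument is elementary and self-contained but is tied to $\ZZ^2$ through duality, and yields only polynomial error terms (whence the restriction $a>1/2$).

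Two small points need tightening in your sketch. First, the gradient bound $|\rho(x,t)-p_{i,j}|\le Cn^{a-1}$ is not uniform over boxes near the origin (where $\|x_{i,j}\|$ is small and the derivative of $\rho$ in $\|x\|$ is not $O(n^{-1})$); there you need the separate observation that $\rho\approx 1$ so the variation is negligible anyway---this is exactly the case split in the paper's Lemma~\ref{dens}. Second, the upper bound on $D_{i,j}$ requires controlling vertices in $R_{i,j}$ that are not in the local giant but reach $\CC_0(t)$ through paths exiting $R_{i,j}$; the Pisztora estimate on the diameter of the second-largest cluster handles this, but it should be stated, since otherwise one only gets $D_{i,j}\ge \theta(p_{i,j})-\delta$.
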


\noindent
From this we get a result about the size of $\CC_0(t)$.

\begin{cor} \label{cor:inside}
$|\CC_0(t)|/t^{2/\alpha} \to \iint \theta( 1- \exp(-\|x\|^{-\alpha})) \, dx_2 \, dx_1$ as $t\to\infty$.
\end{cor}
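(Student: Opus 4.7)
The plan is to combine Theorems~\ref{thm:outside} and~\ref{thm:inside} with a Riemann-sum estimate and the scaling change of variables $y = x/t^{1/\alpha}$. Write $N = t^{1/\alpha}$, so that $n(p,t) = c_{p,\alpha} N$ and $\rho(x,t) = 1 - \exp(-\|x/N\|^{-\alpha}) =: \rho_\infty(x/N)$. Note $\rho_\infty(y) > p_c$ iff $\|y\| < c_{p_c,\alpha}$, and $\theta(p)=0$ for $p \le p_c$ by Kesten's theorem for bond percolation on $\ZZ^2$, so the target integral is actually supported on $R(0,c_{p_c,\alpha})$.

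For the upper bound, Theorem~\ref{thm:outside} gives $\CC_0(t) \subseteq R(0, n(p_c-\ep,t))$ with high probability. With $n = n(p_c+\ep,t)$, split
\[
|\CC_0(t)| \le \sum_{(i,j) \in \Lambda(t,\ep)} |\CC_0(t)\cap R_{i,j}| + \bigl|R(0,n(p_c-\ep,t))\setminus R(0,n(p_c+\ep,t))\bigr|.
\]
The annular term equals $4N^2(c_{p_c-\ep,\alpha}^2 - c_{p_c+\ep,\alpha}^2) = O(\ep)\,N^2$ since $p \mapsto c_{p,\alpha}$ is smooth near $p_c$. For the main sum, Theorem~\ref{thm:inside} gives $|\CC_0(t)\cap R_{i,j}| = n^{2a}\theta(\rho(x_{i,j},t)) + O(\delta\,n^{2a})$ uniformly on $\Lambda(t,\ep)$, and $\sum_{(i,j)\in\Lambda(t,\ep)} n^{2a}\theta(\rho(x_{i,j},t))$ is a Riemann sum for $\int_{R(0,n(p_c+\ep,t))} \theta(\rho(x,t))\,dx$ (the tiling covers this square up to a boundary strip of relative area $O(n^{a-1})$). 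Changing variables $y = x/N$,
\[
\int_{R(0,n(p_c+\ep,t))} \theta(\rho(x,t))\,dx = N^2 \int_{R(0,c_{p_c+\ep,\alpha})} \theta(\rho_\infty(y))\,dy.
\]
Dividing by $N^2 = t^{2/\alpha}$ and sending $t\to\infty$, $\delta\to 0$, then $\ep\to 0$ produces the upper bound.

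The lower bound follows at once from $|\CC_0(t)| \ge \sum_{(i,j)\in\Lambda(t,\ep)} |\CC_0(t)\cap R_{i,j}|$ and the same Riemann-sum calculation, then letting $\ep\to 0$. The main technical points are the Riemann-sum convergence and the near-critical annulus. On $R(0,n(p_c+\ep,t))$ one has $\rho(x,t)\ge p_c+\ep$, so $\theta\circ\rho$ is continuous and bounded away from the singular value $p_c$; moreover $\rho(x,t)$ varies by $O(n^{a-1})\to 0$ on each box (as $a<1$), which is enough for the Riemann sums to converge to the integral uniformly in $t$. The annulus of relative area $O(\ep)$ contributes negligibly in the iterated limit, and the continuity of the integrand across $\|y\| = c_{p_c,\alpha}$ (guaranteed by $\theta(p_c)=0$) lets us pass from the integral over $R(0,c_{p_c+\ep,\alpha})$ to the full integral as $\ep\to 0$.
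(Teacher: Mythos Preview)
Your proof is correct and follows the same strategy as the paper's: use Theorem~\ref{thm:inside} to replace $|\CC_0(t)\cap R_{i,j}|$ by $n^{2a}\theta_{i,j}$, recognize the resulting sum as a Riemann sum, and rescale by $t^{1/\alpha}$. You are more careful than the paper in explicitly invoking Theorem~\ref{thm:outside} for the upper bound, in bounding the near-critical annulus, and in taking the iterated limit $t\to\infty$, $\delta\to 0$, $\ep\to 0$; the paper's short argument leaves these points implicit.
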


Our proof of Theorem \ref{thm:inside} makes heavy use of the planar graph duality for two dimensional bond percolation. Consider bond percolation on the dual lattice
$\dual := \ZZ^2+(1/2,1/2)$ with nearest neighbor edges. Every edge $e$ on $\ZZ^2$ is paired with an edge $e^*$ on $\LL$ that has 
the same midpoint. If $e$ is open (resp.~closed), then $e^*$ is closed (resp.~open). The pairing means that if the density on the original
lattice is $p$, then the density on the dual lattice is $1-p$. We use $P^*_{1-p}$ to denote the percolation on the dual lattice.
It is known that there is a top-to-bottom open crossing
of $[a,b] \times [c,d]$ if an only if there is no left-to-right closed crossing of $[a-1/2,b+1/2] \times [c+1/2,d-1/2]$.
Having mentioned the exact size of the rectangles once,
we will ignore the 1/2's in what follows.  

Let $I_n = \left[-\lceil n/(C_1 \log n) \rceil -1, \lceil n/(C_1 \log n)\rceil\right]$ and for $j \in I_n$ let
\begin{align*}
R_j = [jC_1\log n, (j+1)C_1 \log n] \times [-n,n],\\ 
R^j = [-n,n] \times [jC_1\log n, (j+1)C_1 \log n].
\end{align*}
Note for the next step that the limits on $j$ are chosen so that
the first and last strips in each direction lie outside of $R(0,n(p_c+\ep,t))$. Let $\rad({\cal C}_x)$ be the radius of the cluster that contains $x$. It is known that in homogeneous percolation
\beq
P^*_{p_c-\ep}( \rad({\cal C}_x) \ge k ) \le Ce^{-\gamma_r k}, 
\eeq
for some constants $C$ and $\gamma_r$ that depend on $p_c - \ep$. So, if $n = n(p_c+\ep,t)$ and we pick $C_1$ large enough then
\beq
P^*_{p_c-\ep}( \rad({\cal C}_x) \ge C_1\log n ) \le n^{-3},
\label{dualrad}
\eeq
for all $x \in \LL$.
It follows from \eqref{dualrad} that, with high probability, for all $j \in I_n$: 
(i) there is no left to right dual crossing of any $R_j$ and hence each $R_j$ has an open top to bottom crossing;
and (ii) there is a left to right open crossing of all of the $R^j$.

\begin{figure}[h] 
  \centering
  \includegraphics[bb=0 0 360 335,height=2.0in,keepaspectratio]{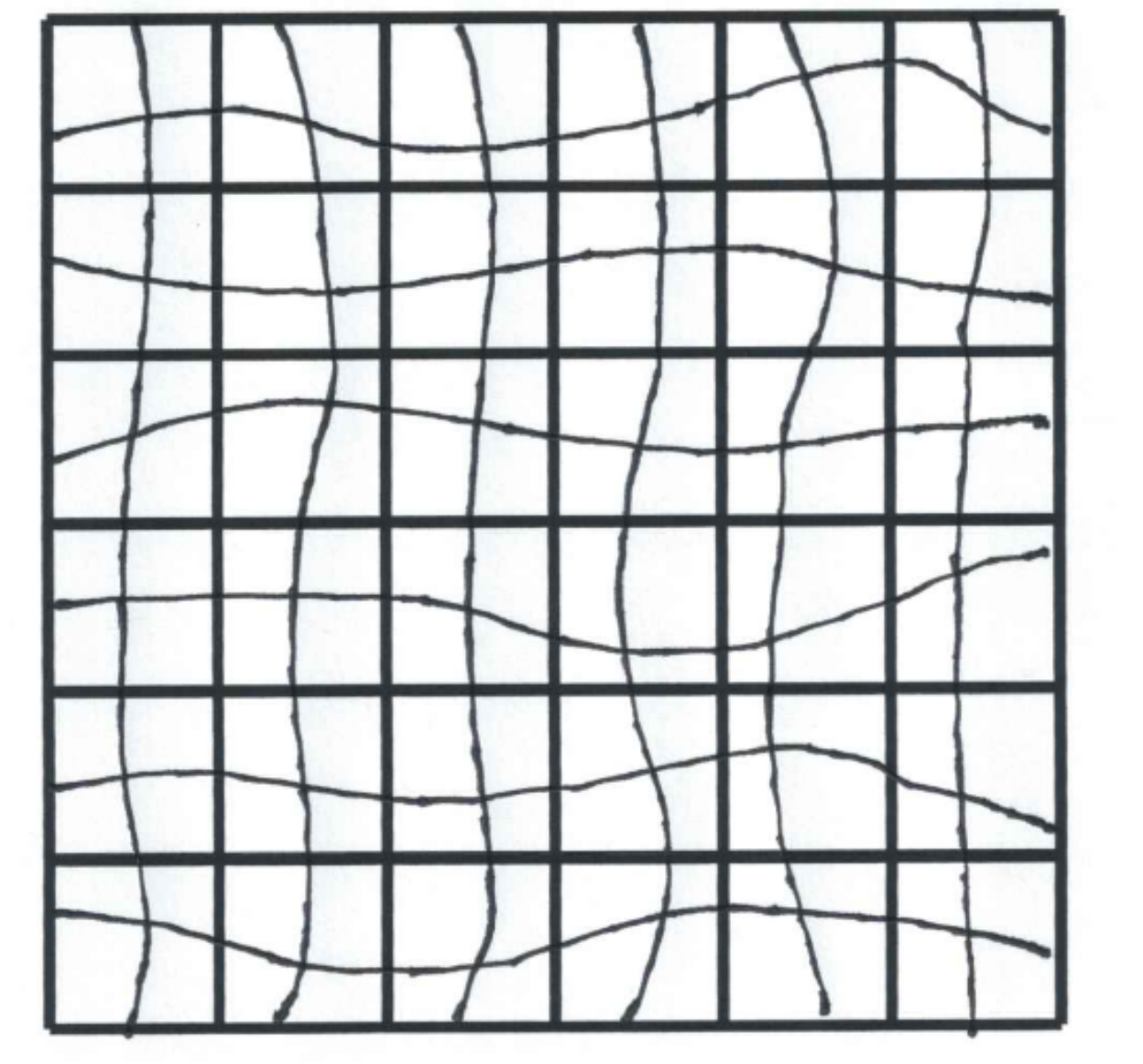}
  \caption{(i) and (ii) give us a net of interweaving crossings. }
  \label{fig:mesh2}
\end{figure}

Let $G(x,t)$ be the event that $\rad({\cal C}_x) > 2 C_1 \log n$.
It is easy to see that if $\|x-y\| > 4C_1 \log n$ then $G(x,t)$ and $G(y,t)$ are independent. Bounding the second moment of $|\CC_0(t) \cap R_{i,j}|$ and using Chebyshev's inequality in conjunction with a union bound over all of the boxes gives the desired result.\eopt

\medskip
After the results mentioned above were proved, we learned about {\it gradient percolation}. In 1985 Sapoval, Rosso, and Gouyet \cite{SRG} considered a model in which a site $(x,y)$ is occupied with probability
$$
p(y) = 1 - \frac{2}{\pi^{1/2}} \int_0^{y/(2t^{1/2})} e^{-u^2} \, du.
$$
This formula arose from a model in which particles do the simple exclusion process in the upper half-space and the $x$ axis is kept occupied by adding particles at empty sites. They looked at the geometry of the boundary of the connected component containing the $x$-axis, finding that the front was fractal with dimension $D_f = 1.76 \pm 0.002$. This paper has been cited 395 times according to Google Scholar. Proving rigorous result about the boundary was mentioned as an open problem in the survey Beffara and Sidorovicius \cite{BefSid} wrote for the Encyclopedia of Mathematical Physics, a five volume set first published in 2004 by Elsevier.

In 2008 Pierre Nolin \cite{Nolin} proved rigorous results for a related percolation model on the two dimensional honeycomb lattice. In the homogeneous version the plane is tiled with hexagons that are black with probability p and white with probability $1-p$. This is equivalent to site percolation on the triangular lattice. Since the pioneering work of Kesten \cite{K82} in the early 1980s, it has been known that the critical value for this model is 1/2. In 2001 Smirnov and Werner \cite{SmiWer} used conformal invariance and work of Kesten \cite{K87} on scaling relations to rigorously compute critical values for this model.

\begin{figure}[h]
\begin{center}
\begin{picture}(330,160)
\put(30,10){\line(1,2){60}}
\put(50,10){\line(1,2){60}}
\put(70,10){\line(1,2){60}}
\put(90,10){\line(1,2){60}}
\put(110,10){\line(1,2){60}}
\put(130,10){\line(1,2){60}}
\put(150,10){\line(1,2){60}}
\put(170,10){\line(1,2){60}}
\put(190,10){\line(1,2){60}}
\put(210,10){\line(1,2){60}}
\put(230,10){\line(1,2){60}}
\put(250,10){\line(1,2){60}}
\put(50,10){\line(-1,2){10}}
\put(70,10){\line(-1,2){20}}
\put(90,10){\line(-1,2){30}}
\put(110,10){\line(-1,2){40}}
\put(130,10){\line(-1,2){50}}
\put(150,10){\line(-1,2){60}}
\put(170,10){\line(-1,2){60}}
\put(190,10){\line(-1,2){60}}
\put(210,10){\line(-1,2){60}}
\put(230,10){\line(-1,2){60}}
\put(250,10){\line(-1,2){60}}
\put(210,130){\line(1,-2){50}}
\put(230,130){\line(1,-2){40}}
\put(250,130){\line(1,-2){30}}
\put(270,130){\line(1,-2){20}}
\put(290,130){\line(1,-2){10}}
\put(30,10){\line(1,0){220}}
\put(40,30){\line(1,0){220}}
\put(50,50){\line(1,0){220}}
\put(60,70){\line(1,0){220}}
\put(70,90){\line(1,0){220}}
\put(80,110){\line(1,0){220}}
\put(90,130){\line(1,0){220}}
\put(190,140){$\ell_N$}
\put(290,60){$N$}
\end{picture}
\caption{Nolin's parallelogram}
\end{center}
\end{figure}
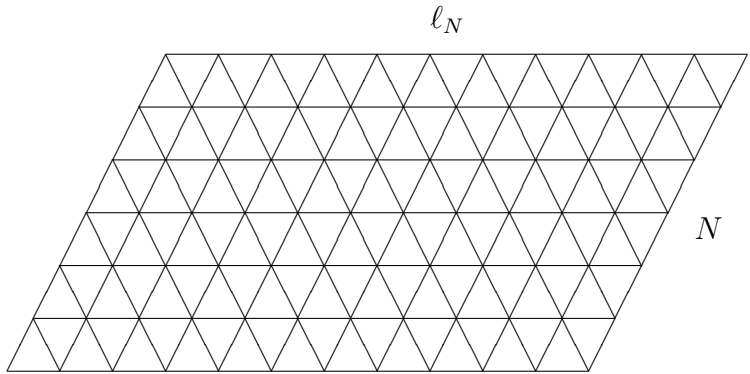

. 

Nolin considered percolation in a parallelogram with height $N$, length $\ell_N$, and interior angles of 60 and 120 degrees, with sites black with probability $1-y/N$ when $0 \le y \le N$. As in our result, the boundary of the cluster of black sites containing the $x$-axis will be close to the line $y=N/2$. Writing $\approx N^a$ for a quantity that is bounded below by $N^{a-\delta}$ and above by $N^{a+\delta}$ for any $\delta>0$, Nolin proved the following result, predicted in \cite{SRG}.

\begin{thm} \label{thm:nolin}
The boundary of the cluster containing the $x$-axis 
remains within $\approx N^{4/7}$ of the line $y = N/2$ and has length $\approx N^{3/7}\ell_N$.
\end{thm}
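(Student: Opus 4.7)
The plan is to exploit the density-dependent correlation length $\xi(p) \sim |p - p_c|^{-\nu}$ with $\nu = 4/3$ for critical site percolation on the triangular lattice, a rigorous consequence of Smirnov's conformal invariance and Kesten's scaling relations. At height $y$ the black density is $p(y) = 1 - y/N$, so its distance to criticality equals $|p(y) - p_c| = |N/2 - y|/N$. A vertical scale $\xi$ should be called ``critical'' when the homogeneous correlation length at density $p(N/2 \pm \xi)$ exceeds $\xi$; setting these equal yields $\xi \sim (\xi/N)^{-4/3}$, that is $\xi^{7/3} \sim N^{4/3}$, giving the predicted scale $\xi \sim N^{4/7}$. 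The proof then reduces to matching upper and lower bounds on the interface fluctuation at this scale, plus a fractal-dimension count for the length.

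For the upper bound, fix $\delta > 0$ and set $y_+ := N/2 + N^{4/7+\delta}$; then $p_c - p(y) \gtrsim N^{-3/7+\delta}$ above $y_+$, so $\xi(p(y)) \ll N^{4/7+\delta}$ by the scaling relations. Using the sharp subcritical exponential decay (Menshikov--Aizenman--Barsky, quantified by Kesten in the near-critical regime), a horizontal black connection of length $N^{4/7+\delta}$ in the strip $[0,\ell_N] \times [y_+, y_+ + N^{4/7+\delta}]$ has probability $\exp(-N^{c\delta})$; a union bound over the $\ell_N$ horizontal starting positions still gives a super-polynomially small total, so with high probability the strip contains a horizontal white dual crossing. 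A symmetric argument produces a black crossing below $y = N/2 - N^{4/7+\delta}$. These sandwich the interface into a band of height $2N^{4/7+\delta}$.

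For the matching lower bound and for the length, I would zoom into windows of side $N^{4/7-\delta}$ straddling $y = N/2$. Inside such a window the density varies by at most $O(N^{-3/7-\delta})$, so by monotone coupling the configuration is pinched between homogeneous percolations at $p_c \pm N^{-3/7-\delta}$, whose correlation lengths far exceed the window size. Standard critical RSW and the one-arm estimate $P(0 \leftrightarrow \partial B_r) \asymp r^{-5/48}$ then apply inside, forcing vertical interface excursions of order $N^{4/7-\delta}$ in each window, so over the $\ell_N$-long strip the fluctuation is at least of this order. For the length, the SLE$_6$ interface has fractal dimension $7/4$ (Smirnov--Werner together with Kesten's scaling relations), so each critical window contains $\asymp (N^{4/7})^{7/4} = N$ interface sites; tiling horizontally gives $\ell_N / N^{4/7}$ such windows and a total of $\asymp N^{3/7}\ell_N$ interface sites, with matching upper and lower bounds coming from first- and second-moment computations against the four-arm exponent.

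The main obstacle is the quantitative transfer of critical arm-exponent estimates (one-arm $5/48$, four-arm $5/4$, polychromatic arms) to the inhomogeneous near-critical model on windows of size $\xi(p)$. This is precisely the content of Kesten's scaling-relations philosophy: one must show that arm probabilities and connection functions in the gradient model differ from their homogeneous-critical counterparts by at most small multiplicative errors, even though the density slides by $O(1/N)$ across the window. Making this comparison uniform across all horizontal locations and all window sizes $N^{4/7 \pm \delta}$, with error small enough to survive the union bound over $\ell_N / N^{4/7}$ windows, is the genuinely difficult step. Once this near-critical arm control is in place, the remaining ingredients --- RSW, exponential decay past the correlation length, and the $7/4$ interface dimension --- are by now standard.
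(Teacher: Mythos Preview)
The paper does not prove this statement at all: Theorem~\ref{thm:nolin} is quoted verbatim from Nolin~\cite{Nolin} as background, introduced by the sentence ``Nolin proved the following result, predicted in \cite{SRG}.'' There is therefore no proof in the paper to compare your proposal against.

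That said, your sketch is a faithful high-level summary of how Nolin's argument actually proceeds: the balance $\xi \sim (\xi/N)^{-4/3}$ giving the characteristic scale $N^{4/7}$, the sandwich between a supercritical black crossing below and a subcritical white crossing above via near-critical exponential decay, the use of RSW and arm exponents inside windows of the characteristic scale, and the length count via the $7/4$ interface dimension. You have also correctly identified the genuine technical crux, namely the uniform transfer of arm-probability estimates from homogeneous critical percolation to the inhomogeneous gradient model on scales up to the correlation length. In Nolin's paper this is handled by carefully importing Kesten's near-critical scaling machinery~\cite{K87} together with the exact exponents of Smirnov--Werner~\cite{SmiWer}; your proposal acknowledges this step but does not carry it out, so as written it is an outline rather than a proof. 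If you were asked to supply a proof for this paper, the appropriate response would simply be to cite~\cite{Nolin}.
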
 

\noindent
To connect with the original work in \cite{SRG}, Nolin says ``one can expect to observe a nontrivial limit, of fractal dimension 7/4, with an appropriate scaling (in $N^{4/7}$) of the axes, but the critical exponents obtained do not correspond to a fractal dimension of the limiting object.''

\begin{figure}[h] 
  \centering
  \includegraphics[width = 8 cm]{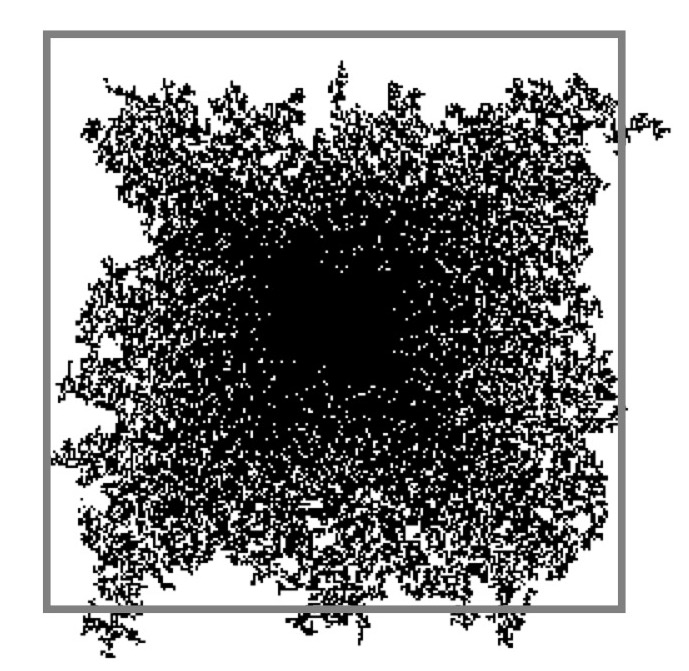}
  \caption{$\mathbb C_0$ for $\alpha=1$ when $n(p_c,t) = 150$ ($t = 104$). The ambient box has radius $150$.}
  \label{fig:sim}
\end{figure}

Since it is expected, but not yet proved, that the critical exponents are the same for bond percolation on the square lattice, we cannot convert Nolin's result into a theorem about our model. To make the connection between our result and his, let 
$$
N = n(p_c,t) = c_{p_c,\alpha} t^{1/\alpha},
$$
where $c_{p,\alpha}$ is defined in \eqref{cpa}. Changing variables 
$$
\rho((yN,0),t) = 1 - \exp(-t (c_{p_c,\alpha} yt^{1/\alpha})^{-\alpha}) = 1 - \exp( -  y^{-\alpha} \log 2 ) \equiv f(y).
$$
Near 1 we have $f(1+\delta) = 1/2 + f'(1)\delta + o(\delta)$. Theorem \ref{thm:outside} and \ref{thm:inside} imply that we can confine our attention this region. Only near the corners of the right-edge of $R(0,N)$ do we notice a difference between a model with probabilities that depend on $x$ and ours that depend on $\|x\|$, so it is reasonable to expect that the conclusion of Theorem \ref{thm:nolin} will hold for our model. Note that the formula for $f(y)$ tells us that boundary fluctuations will not depend on $\alpha$ but the density profile of $\CC_0(t)$ will. 

\section{Proof of Theorem \ref{thm:outside}}

\begin{proof}
Let $N = n(p_c-\ep,t)$. Using \eqref{dualrad} and the fact that $P$ and $P^*$ are the same (except for being defined on different lattices)
\beq
P_{p-\ep}(\rad(C_x) \ge C_1\log N ) \le N^{-3}.
\label{out1}
\eeq
Let $B_N$ be the event that there is an open path from $\partial R(0,N)$ to $\partial R(0, N + C_1\log N)$.
To bound $P(B_N)$ note that if there is such an open path then there is one 
that stays entirely in the annulus $R(0,N+C_1\log N) - R(0,N)$
where all of the bonds are open with probability $p_c-\ep$.
Using \eqref{out1} with a union bound gives $P(B_N) \le 8/N \to 0$.  
This implies
$$
P(\exists~x \in \CC_0(t): ||x|| \geq N + C_1\log N ) \to 0
$$
which proves the desired result.
\end{proof}

\section{Proof of Theorem \ref{thm:inside}}

We fix a time $t$, let $n = n(p_c + \epsilon, t)$ and partition the box $R(0, n)$ into two sets of strips $R_j$ and $R^j$, as described in Section \ref{Intro}. Define the following pair of events:
\begin{align*}
A_j  & = \{\text{$\exists$ a top-to-bottom crossing in $R_j = [jC_1\log n, (j+1)C_1 \log n] \times [-n,n]$}\},\\
A^j &= \{\text{$\exists$ a left-to-right crossing in $R^j = [-n,n] \times [jC_1\log n, (j+1)C_1 \log n]$}\}.
\end{align*}

\begin{lemma} \label{Ajbound}
For $j \in I_n$, (i)  $P(A_j)$, $P(A^j)$ $\geq$ $1 - n^2$ and
(ii) $P(\bigcap\limits_{j \in I_n} A_j \cap A^j) \geq 1- 2n^{-1}$.
\end{lemma}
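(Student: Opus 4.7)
The plan is to combine planar duality with the subcritical radius bound \eqref{dualrad}, then take a union bound across the $O(n/\log n)$ strips. I will interpret the exponent in (i) as $1-n^{-2}$, which is the bound the union-bound count in (ii) actually requires.

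First I would handle $A_j$; the argument for $A^j$ follows by a $90^\circ$ rotation. By the planar duality recalled in Section \ref{Intro}, the complement $A_j^c$ (no open top-to-bottom crossing of $R_j$) is equivalent to the existence of a left-to-right \emph{open} path in the dual across the width-$C_1\log n$ strip $R_j$. Any such dual path must emanate from some dual vertex $x$ on the left boundary of $R_j$ and reach $L^\infty$-distance at least $C_1\log n$ from its starting point, so $\rad({\cal C}^*_x) \geq C_1\log n$. Hence
$$
P(A_j^c) \;\le\; \sum_{x \text{ on left boundary of } R_j} P^*\!\left(\rad({\cal C}^*_x) \ge C_1\log n\right).
$$

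The next step is to bound each term by $n^{-3}$. For every $x$ in any strip $R_j$ with $j \in I_n$, we have $\|x\|_\infty \le n + O(\log n)$. Using the identity $t n^{-\alpha} = -\log(1/2-\ep)$ together with continuity of $\rho(x,t)$ in $\|x\|$, I would check that $\rho(x,t) \ge p_c + \ep/2$ uniformly on the strips, provided $n$ is large enough. Consequently each dual edge at $x$ is open with probability at most $p_c - \ep/2$, so by the standard monotone coupling the inhomogeneous dual cluster of $x$ is dominated by the homogeneous $(p_c-\ep/2)$-cluster. The estimate \eqref{dualrad}, applied with $\ep/2$ in place of $\ep$ (the change is absorbed into the choice of $C_1$), then gives $P^*(\rad({\cal C}^*_x) \ge C_1\log n) \le n^{-3}$. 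Since the left boundary of $R_j$ has at most $2n+1$ dual vertices, this yields $P(A_j^c) \le (2n+1)n^{-3} \le n^{-2}$ for $n$ large, which is (i).

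For (ii), $|I_n| \le 2n/(C_1\log n) + 4$, so a union bound over the $2|I_n|$ events gives
$$
P\!\left(\bigcup_{j \in I_n}(A_j^c \cup (A^j)^c)\right) \;\le\; 4|I_n|\,n^{-2} \;\le\; 2 n^{-1}
$$
for $n$ large, as claimed.

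The only real obstacle is the inhomogeneity near the boundary of $R(0,n)$: the first and last strips lie slightly outside $R(0,n)$, where $\rho(x,t)$ drops below $p_c + \ep$, so one cannot directly quote \eqref{dualrad} at parameter $p_c - \ep$. The remedy is to shrink $\ep$ to $\ep/2$ via the monotonicity of $\rho$ in $\|x\|$, at the cost of increasing $C_1$ by a constant factor that depends only on $\ep$ and $p_c$; everything else is a routine application of duality, stochastic domination, and union bounds.
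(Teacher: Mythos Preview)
Your argument is correct and follows essentially the same route as the paper: duality converts $A_j^c$ into the existence of a short dual crossing, \eqref{dualrad} bounds each starting point by $n^{-3}$, and a union bound over $O(n)$ starting points and then over the $O(n/\log n)$ strips gives (i) and (ii). You are in fact slightly more careful than the paper in handling the inhomogeneity on the outermost strips (shrinking $\ep$ to $\ep/2$ via monotonicity), a point the paper's proof glosses over.
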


\begin{proof}
By symmetry, it suffices to prove (i) for the events $A^j$. Denote the left and right edges of $R^j$ by $\partial^LR^j$ and by $\partial^RR^j$, respectively, and by using the dual lattice $\dual= \ZZ^2 + (1/2, 1/2)$ defined in Section \ref{Intro}, the complement of $A^j$
$$
A^{j,c} = \bigcup_{x \in \partial^LR^j}\text{\{$\exists$ an open path from $x$ to $\partial^RR^j$ in $\dual$}\}. 
$$
Using \eqref{dualrad} with a union bound, we have 
$$
P(A^{j,c}) \leq |\partial^LR^j|P(\rad(\mathcal{C}_{p_c-\epsilon}(x)) > C_1\log(n)) 
 \leq n \cdot n^{-3} = n^{-2}.
$$
proving our first claim.

To prove (ii), note that we have a total of $\le 2n$ horizontal and vertical strips and thus
$$
P\left(\bigcup_{j \in I_n} A_j^c \cup A^{j,c} \right) \leq 2n\cdot n^{-2}
 = 1 - 2n^{-1}.
$$
\end{proof}

Lemma \ref{Ajbound} guarantees that there exists a ``net" with mesh-size $C_1\log n$ throughout $R(0,n)$. It is necessary to show that $\mathbb C_0$ is captured by this net. 

\begin{lemma} \label{caught_in_the_net}
	$P(\text{there exists a closed edge in $[-C_1 \log n, C_1 \log n]^2$})  \to 0$.
\end{lemma}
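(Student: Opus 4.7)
The plan is to just union-bound over all edges in the small box, using the explicit formula for the probability that an edge is closed. Recall that an edge with midpoint $x$ is closed at time $t$ with probability $\exp(-t\|x\|^{-\alpha})$, and that $n = n(p_c+\ep,t) = c_{p_c+\ep,\alpha}t^{1/\alpha}$, so that $t = c_{p_c+\ep,\alpha}^{-\alpha} n^{\alpha}$. Every edge $e$ whose midpoint $x$ lies in $[-C_1\log n, C_1\log n]^2$ has $\|x\|_\infty \le C_1\log n + \tfrac12 \le 2C_1 \log n$ for $n$ large, so the probability that $e$ is closed at time $t$ is at most
\[
\exp\!\bigl(-t (2C_1\log n)^{-\alpha}\bigr) = \exp\!\left(-\frac{1}{(2 c_{p_c+\ep,\alpha} C_1)^\alpha}\left(\frac{n}{\log n}\right)^\alpha\right).
\]
Since $n/\log n \to \infty$, this is super-polynomially small in $n$.

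The number of edges with midpoint in $[-C_1\log n, C_1\log n]^2$ is at most $8(C_1\log n)^2 + O(\log n)$. A union bound then gives
\[
P\bigl(\text{some edge in } [-C_1\log n, C_1\log n]^2 \text{ is closed}\bigr) \le 8(C_1\log n)^2 \exp\!\left(-c\,(n/\log n)^\alpha\right),
\]
which tends to $0$ as $n\to\infty$, proving the lemma. There is no real obstacle here: the key point is just that the scale $C_1 \log n$ is much smaller than the scale $n = c_{p_c+\ep,\alpha} t^{1/\alpha}$ at which we expect the transition from fully open to mostly closed, so near the origin essentially all edges have had ample time to open. The polylogarithmic count of edges is dwarfed by the stretched-exponential decay of the closed probability.
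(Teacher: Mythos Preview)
Your proof is correct and follows essentially the same approach as the paper: bound the closed-probability of each edge in the box using $t = c\,n^{\alpha}$ and $\|x\|\le O(\log n)$, then union-bound over the $O(\log^2 n)$ edges. Your version is in fact slightly more careful with constants than the paper's (which writes $\exp(-cn^{\alpha}/C_1\log n)$ rather than $\exp(-c\,(n/\log n)^{\alpha})$), but the argument is the same.
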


\begin{proof}

 Let $R$ denote the square in the lemma statement. Since $t =c n^\alpha$ for some $c>0$, it follows that $ \max_{x \in R} (1- \rho(x,t)) \leq \exp( - c n^\alpha / C_1\log n).$
Using this estimate in a union bound over the $4C_1^2 \log^2 n$ edges in $R$ gives the claimed convergence.
\end{proof}

We now consider a second partition of our original box $R(0, n)$, by tiling it with boxes $R_{i,j} = [in^a,(i+1)n^a] \times [jn^a,(j+1)n^a]$, centered at $x_{i, j}$, as described in Section \ref{Intro}. We will argue that the density of open sites computed in each rectangle $R_{i, j}$, is, with high probability, close to the percolation probability when bonds are open with probability $\rho(x_{i,j},t)$. 

For points $x$ inside an arbitrary box $R_{i, j}$, we examine the behavior of $\theta(\rho(x, t))$, which is the percolation probability  probability measure for bond percolation with parameter $\rho(x, t)$. The following result shows that, as $t \to\infty$, $\theta(\rho(x, t))$ remains almost constant as $x$ varies within $R_{i, j}$.

\begin{lemma} \label{dens}
Let $n = n(p_c,t)$ and $a<1$. As $n\to\infty$
$$
\sup \{ |\theta(\rho(x, t)) - \theta(\rho(y, t))| : \|x-y\| \le 2n^a \} \to 0.
$$
 
\end{lemma}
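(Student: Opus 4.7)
My plan is to split the proof into an analytic bound on the modulus of continuity of $\rho(\cdot,t)$ followed by an appeal to uniform continuity of $\theta$ on $[0,1]$, then close the argument with a standard $\eta$--$\delta$ chase.

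First I would estimate how fast $\rho(x,t) = 1 - e^{-t\|x\|^{-\alpha}}$ varies as $x$ moves. Writing $\rho(x,t) = f(\|x\|)$ with $f(r) = 1 - e^{-tr^{-\alpha}}$, differentiation gives $f'(r) = \alpha t r^{-\alpha-1} e^{-tr^{-\alpha}}$. Optimizing $\log f'(r)$ shows that $|f'|$ is maximized at the unique $r_\ast$ satisfying $t r_\ast^{-\alpha} = (\alpha+1)/\alpha$, and plugging back gives $\sup_{r>0} |f'(r)| = C_\alpha t^{-1/\alpha}$ for a constant $C_\alpha$ depending only on $\alpha$. Since $n = c_{p_c,\alpha} t^{1/\alpha}$ by \eqref{cpa}, this supremum is $O(1/n)$. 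Combined with the reverse triangle inequality $|\|x\| - \|y\|| \le \|x-y\|$ (valid in the $L^\infty$ norm), this yields
$$
|\rho(x,t) - \rho(y,t)| \;\le\; C_\alpha n^{-1}\,\|x-y\| \;\le\; 2 C_\alpha\, n^{a-1}
$$
whenever $\|x-y\|\le 2n^a$, and the right-hand side tends to $0$ as $n\to\infty$ because $a<1$.

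Second, I would invoke uniform continuity of $\theta$ on $[0,1]$. The percolation probability is continuous on $[0,p_c)$ (where it vanishes) and on $(p_c,1]$ by standard supercritical continuity (\cite{Gri97}). The Harris--Kesten theorem $\theta(p_c)=0$ for bond percolation on $\ZZ^2$ glues these pieces together, so $\theta$ is continuous on the compact interval $[0,1]$ and hence uniformly continuous there. Given $\delta > 0$, choose $\eta > 0$ from uniform continuity; then pick $n$ large enough that the previous bound forces $|\rho(x,t) - \rho(y,t)| < \eta$ for every pair with $\|x-y\| \le 2n^a$. This yields $|\theta(\rho(x,t)) - \theta(\rho(y,t))| < \delta$ uniformly, as required.

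The only subtlety is the continuity of $\theta$ across $p_c$: without the classical Harris--Kesten input $\theta(p_c)=0$, the function could in principle jump at criticality and the passage from ``$\rho$ close'' to ``$\theta \circ \rho$ close'' would break. Once that fact is granted, the remainder is a short calculus estimate and uniform continuity on a compact interval, so I do not anticipate a genuine probabilistic obstacle.
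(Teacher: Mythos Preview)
Your argument is correct and is in fact cleaner than the paper's. Both proofs first reduce to uniform continuity of $\theta$ on $[0,1]$ (using $\theta(p_c)=0$ at the critical point) and then control $|\rho(x,t)-\rho(y,t)|$ uniformly over $\|x-y\|\le 2n^a$. The difference lies in how that analytic estimate is obtained. The paper introduces an auxiliary exponent $b\in(a,1)$ and splits into two regimes: for $\|x\|\le n^b$ both $\rho(x,t)$ and $\rho(y,t)$ are close to $1$ because $e^{-t\|x\|^{-\alpha}}\le e^{-ct^{1-b}}\to 0$; for $\|x\|\ge n^b$ it bounds $\|u\|^{-\alpha}-\|v\|^{-\alpha}$ via $\int_{\|u\|}^{\|u\|+2n^a}\alpha r^{-\alpha-1}\,dr$. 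You instead optimize $|f'(r)|=\alpha t r^{-\alpha-1}e^{-tr^{-\alpha}}$ globally, find that the maximum occurs at $tr_\ast^{-\alpha}=(\alpha+1)/\alpha$ and equals $C_\alpha t^{-1/\alpha}=O(1/n)$, and conclude by the mean value theorem and the reverse triangle inequality that $|\rho(x,t)-\rho(y,t)|\le 2C_\alpha n^{a-1}$. Your route avoids the case split and the choice of $b$, gives a bound valid for all $x,y$ (not just those inside $R(0,n)$), and makes the exponent $a-1$ explicit; the paper's two-region argument is more hands-on but carries slightly more bookkeeping. Either way the essential input is the same: $\rho(\cdot,t)$ has Lipschitz constant $O(1/n)$ on the relevant scale.
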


\begin{proof} Since $p \to \theta(p)$ is uniformly continuous, and $\theta(p)=0$ for $p<p_c$, it suffices to show that 
$$
\sup \{ |\rho(x, t) - \rho(y, t)| : \|x-y\| \le n^a,  \|x\|, \|y\| \le n \} \to 0.
$$
A little algebra gives
$$
\rho(x, t) - \rho(y, t)  = e^{-t\|x\|^\alpha} ( 1- e^{-t[\|y\|^\alpha - \|x\|^{\alpha}]}).
$$ 
Suppose first that $\|x\| \le n^b$ where $a < b < 1$. The second term is $\le 1$. Since $n= c(p_c)t^{1/\alpha}$ the first is 
$\le \exp( - c t^{1-b} )\to 0$.

If $\|x\| \ge n^b$ and $\|x-y\| \le n^a$ then for large $n$, $\|y\| \ge n^b/2$. Let $u$ be the point in $\{x,y\}$ with smaller norm and let $v$ be the one with larger norm. Notice that
$$
\|u\|^{-\alpha} - \|v\|^{-\alpha}  \le \|u\|^{-\alpha} - (\|u\| +  2n^a )^{-\alpha} 
 = - \int_{\|u\|}^{\|u\|+2n^a} (-\alpha x^{-\alpha-1}) \, dx .
$$
So, we have
$\|u\|^{-\alpha} - \|v\|^{-\alpha} \le \alpha n^a (n^b/2)^{-(1+\alpha)} \to 0$
since $b>a$ and $\alpha>0$.
\end{proof}

\begin{lemma} \label{localdensity}
Let $\theta_{i, j} = \theta(\rho(x_{i, j}, t))$.
For each $\delta > 0$, there is a constant $C_2$, independent of $i,j\in I_n$ and of $\delta$, so that
 $$
P\left(| |\CC_0 \cap R_{i, j}| - \theta_{i,j}n^{2a}| > \delta n^{2a}\right) 
\leq \frac{C_2\log n^2}{\delta^2n^{2a}}.
$$
\end{lemma}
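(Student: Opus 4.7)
The plan is to replace $\xi_x := \mathbf{1}\{x \in \CC_0(t)\}$ by a local surrogate $\tilde\xi_x$ whose value depends only on edges in a small box around $x$, bound the variance of $\sum_{x \in R_{i,j}} \tilde\xi_x$ using finite range of dependence, and then apply Chebyshev's inequality. Concretely, I would define $\tilde G(x,t)$ as the event that $x$ lies in an open cluster of $\ell^\infty$-radius at least $C_1\log n$ when only edges with both endpoints in the box $B(x, C_1 \log n) = \{z : \|z - x\| \leq C_1 \log n\}$ are used, and set $\tilde\xi_x := \mathbf{1}\{\tilde G(x,t)\}$. Since $\tilde\xi_x$ is measurable with respect to the finitely many edges inside $B(x, C_1 \log n)$, the variables $\tilde\xi_x$ and $\tilde\xi_y$ are independent whenever $\|x - y\| > 2 C_1 \log n$.

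First, I would take $\mathcal E$ to be the intersection of the net event in Lemma \ref{Ajbound} with the event of Lemma \ref{caught_in_the_net}. Inflating $C_1$ in \eqref{dualrad} (the exponent $3$ may be replaced by any fixed $M$), one has $P(\mathcal E^c) = o(n^{-M})$ for every $M$. On $\mathcal E$ I claim $\xi_x = \tilde\xi_x$ for every $x \in R_{i,j}$: if $\tilde G(x,t)$ occurs then the open cluster of $x$ has radius at least $C_1\log n$ and must meet the top-to-bottom crossing of some $R_j$ and the left-to-right crossing of some $R^j$, so via the interweaving net and the fully open region near $0$ furnished by Lemma \ref{caught_in_the_net}, $x$ is connected to $0$; conversely, if $\tilde G(x,t)$ fails then the cluster of $x$ is contained in $B(x, C_1 \log n)$, which does not contain $0$ once $\|x\| > C_1 \log n$ (the $O(\log^2 n)$ points with $\|x\| \leq C_1 \log n$ contribute negligibly and can be absorbed into $C_2$).

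Next, I would estimate $E[\tilde\xi_x]$. The computation in the proof of Lemma \ref{dens} shows that the edge probabilities across $B(x, C_1 \log n)$ differ from $\rho(x_{i,j}, t)$ by $o(1)$ uniformly in $x \in R_{i,j}$. Sandwiching Poisson percolation on $B(x, C_1\log n)$ between two homogeneous bond percolations with parameters $\rho(x_{i,j}, t) \pm o(1)$, and invoking the homogeneous fact that $P_p(\rad(\mathcal C_0) \geq k) \to \theta(p)$ as $k \to \infty$ for $p > p_c$ (valid here since $\rho(x_{i,j}, t) > p_c + \ep'$ for some $\ep' > 0$ by the definition of $n = n(p_c+\ep,t)$), I would obtain $E[\tilde\xi_x] = \theta_{i,j} + o(1)$ uniformly in $x \in R_{i,j}$. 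Setting $\mu_{i,j} = E \sum_{x \in R_{i,j}} \tilde\xi_x$, this yields $|\mu_{i,j} - \theta_{i,j} n^{2a}| \leq (\delta/2) n^{2a}$ for $n$ large.

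Finally, splitting pairs $x,y \in R_{i,j}$ by whether $\|x-y\| \leq 2 C_1 \log n$ and using the independence above for the remaining pairs, a direct count gives $\mathrm{Var}\bigl(\sum_{x \in R_{i,j}} \tilde\xi_x\bigr) \leq C(C_1 \log n)^2 n^{2a}$. Chebyshev's inequality at threshold $(\delta/2) n^{2a}$, combined with the decomposition $P(|\sum_{x} \xi_x - \theta_{i,j} n^{2a}| > \delta n^{2a}) \leq P(|\sum_x \tilde\xi_x - \mu_{i,j}| > (\delta/2) n^{2a}) + P(\mathcal E^c)$ and the super-polynomial decay of $P(\mathcal E^c)$, will yield the claimed bound with $C_2$ chosen large enough to absorb the constants and to handle small $n$. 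The main obstacle is the first step: carefully verifying that on $\mathcal E$ the local event $\tilde G(x,t)$ genuinely captures membership in $\CC_0(t)$, which requires checking that a cluster of radius $\geq C_1\log n$ at $x$ actually threads through the interweaving crossings of Lemma \ref{Ajbound} all the way back to the dense region near the origin secured by Lemma \ref{caught_in_the_net}.
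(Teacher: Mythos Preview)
Your approach is essentially the same as the paper's: both replace the global indicator $\mathbf 1\{x\in\CC_0(t)\}$ by the local surrogate $\mathbf 1\{\rad(\mathcal C_x)\ge c\,C_1\log n\}$ (the paper takes $c=2$, you take $c=1$), argue that on the net event of Lemma~\ref{Ajbound} together with Lemma~\ref{caught_in_the_net} these indicators coincide, use the finite range of dependence to bound the variance by $O(n^{2a}\log^2 n)$, and finish with Chebyshev. Your treatment is in fact a bit more careful than the paper's in two places: you make explicit the sandwiching argument needed to show $P(\tilde G(x,t))\to\theta_{i,j}$ (the paper simply invokes Lemma~\ref{dens}, which strictly speaking concerns $\theta(\rho(\cdot,t))$ rather than $P(\rad(\mathcal C_x)\ge 2C_1\log n)$ in the inhomogeneous model), and you account for $P(\mathcal E^c)$ in the final probability bound rather than tacitly working on the net event.
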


\begin{proof} To argue this, we define the following random variable
$$
S_{i, j} = \sum_{y \in R_{i, j}} 1\{\rad(\mathcal{C}_y \geq 2C_1 \log n)\},
$$
where $C_1\log n$ represents the lengths of the short sides of the rectangles $R_j$ and $R^j$. For all $y$ $\in$ $R_{i, j}$, let $A_y = \{\rad(\mathcal{C}_y \geq 2C_1 \log n)\}$. Recalling that this set of rectangles generates with high probability a net of open horizontal and vertical crossings, we note that $S_{i, j} = |\CC_0 \cap R_{i, j}|$. We now center the variable $S_{i, j}$ around its mean and define:
$$
\bar{S}_{i, j} = S_{i, j} - ES_{i, j}  = \sum_{y \in R_{i, j}} \left( 1\{A_y\} - \theta_{y}\right), 
$$
where $\theta_{y} = P(A_y)$ for all $y$. 
Since $E(\bar{S_k}) = 0$, we have
\begin{align*}
\var(\bar{S}_{i, j}) = E(\bar{S}_{i, j}^2) = E\left(\sum_{x, y \in R_{i, j}} \left(\mathds{1}\{A_y\}\mathds{1}\{A_x\} - \theta_{x}\theta_{y}\right)\right).
\end{align*} 
The random variables $\mathds{1}\{A_x\}$ and $\mathds{1}\{A_y\}$ are independent, if $|x-y| \geq 4C_1\log n$. 
Using this observation and the fact that $|E(\mathds{1}\{A_x\}\mathds{1}\{A_y\}) - \theta_{i, j}^2|$ $\leq$ 1, we obtain
\begin{align*}
E(\bar{S}_{i, j}^2) &= \sum_{|x-y| < 2C_1\log n}  \left( E(\mathds{1}\{A_x\}\mathds{1}\{A_y\}) - \theta_{i, j}^2 \right) \\
&\leq | \{(x, y) \in R_{i, j}: \|x-y\| < 4C_1\log n \}| \le C_2 n^{2a}{\log^2 n}.
\end{align*}
Using  Chebyshev's inequality gives
\begin{align*}
P(|\bar{S}_{i, j}| > \delta n^{2a}) \leq \frac{C_2 n^{2a}\log^2 n}{\delta^2(n^{2a})^2} =\frac{C_2 \log^2 n}{\delta^2n^{2a}}.
\end{align*}
Since Lemma \ref{dens} implies 
$$
|R_{i,j}|^{-1} \sum_{y\in R_{ij}} \theta_y - \theta_{i,j} \to 0
$$
this proves the lemma.
\end{proof}

Given this series of results, we can now easily complete the

\begin{proof}[ Proof of Theorem \ref{thm:inside}]
By Lemma \ref{caught_in_the_net} $\mathbb C_0$ connects to the ``net" from Lemma \ref{Ajbound}. Thus, it contains a crossing of every strip $R^j$ and $R_j$ for $j \in I_n$. 
Next,  note that for each $\delta > 0$
$$
P\left( \sup_{(i,j) \in \Lambda(t,\ep)} |D_{i,j}(t) - \theta(P(x_{i,j},t))| > \delta \right)  \leq \sum_{(i,j) \in \Lambda(t,\ep)}P(|S_{i, j} - \theta_{i, j} n^{2a}| > \delta n^{2a}).
$$
Using Lemma \ref{localdensity}  the above is
$$
\leq n^{2-2a}\frac{C_2\log^2 n}{\delta^2n^{2a}} \leq n^{2 -4a}\frac{C_2\log^2 n}{\delta^2} \to 0,
$$
since $a > \frac{1}{2}$. 
\end{proof}

\begin{proof}[Proof of Corollary \ref{cor:inside}] Observe that
$|\CC_0(t)| = \sum_{i,j\in I_n} |\CC_0(t) \cap R_{i,j}|$. Theorem \ref{thm:inside} implies 
$$
\frac{1}{n^2} \left| \sum_{i,j\in I_n} |\CC_0(t) \cap R_{i,j}| - \sum_{i,j} \theta_{i,j} \right| \to 0.
$$ 
Scaling space by $t^{1/\alpha} = O(n)$ and noting that the squares now have side length $O(n^{a-1})$,
we have 
$$
\frac{1}{t^{2/\alpha}} \sum_{i,j} \theta_{i,j} \to \iint (1 - \exp(-\|x\|^{-\alpha}) \, dx_2 \, dx_1,
$$
which completes the proof.
\end{proof}


\begin{thebibliography}{99}

\bibitem{BB}
Riddhipratim Basu and Nayantara Bhatnagar. (20160 Limit theorems for longest
monotone subsequences in random Mallows permutations.
arXiv:1601.02003

\bibitem{BefSid}
Vincent Beffara and Vladas Sidorovicius (2005)
Percolation Theory
arXiv:0507220

\bibitem{CRS}
Irina Cristali, Vinit Ranjan, Jake Steinberg, Erin Beckman, Rick Durrett,
Matthew Junge, and James Nolen (2017 Block sizes in two families of regenerative
permutations. arXiv:1708.05626

\bibitem{DPT}
J.-J. Duchamps, J. Pitman, and W. Tang (2017) Renewal sequences and record chains
related to multiple zeta sums. arXiv:1707.07776

\bibitem{Gastner}
Michael T Gastner and Be\'ata Oorny (2012)
The geometry of percolation fronts with spatially varying densities.
{\it New Journal of Physics.} 14, paper 103019

\bibitem{GP16}
Alexey Gladkich and Ron Peled. On the cycle structure of Mallows permutations.
arXiv:1601.06991

\bibitem{Gri97}
Geoffrey R. Grimmett. {\it Percolation.} Second Edition, Springer, New York 

\bibitem{K82}
Harry Kesten (1982) {\it Percolation Theory for Mathematicians.} Birkhauser, Bosten.

\bibitem{K87}
Harry Kesten (1987) Scaling relations for 2D perclation. {\it Commun. Math. Phys.} 109, 109--156

\bibitem{Nolin}
Pierre Nolin (2008)
Critical exponents of planar gradient pecolation.
{\it Annals of Probability.} 36, 1748--1776

\bibitem{PT17} Jim Pitman and Wenpin Tang (2017) Regenerative random permutations of integers.
arXiv:1704.01166

\bibitem{SRG}
Bernard Sapoval, Michel Rosso, Jean-Francois Gouyet (1985)
The fractal nature of a diffusion front and the relation to percolation.
{\it Journal de Physique Letters.} 46, 149--156

\bibitem{SmiWer}
S. Smirnov and W. Werner. (2001) Critical exponents for two-dimensional percolation.
{\it Math. Res. Letters.} 8, 729--744

\end{thebibliography}
\end{document}